\documentclass[11pt,a4paper]{amsart}
\usepackage[T1]{fontenc}
\usepackage[utf8]{inputenc}
\usepackage{lmodern}
\usepackage{amsmath,amssymb,amsthm,mathtools}
\usepackage[a4paper,left=27mm,right=27mm,top=26mm,bottom=27mm]{geometry}
\usepackage{microtype}
\usepackage{needspace}
\usepackage{verbatim}
\usepackage{xcolor}
\usepackage[colorlinks=true,linkcolor=blue!45!black,citecolor=blue!45!black,urlcolor=blue!45!black]{hyperref}
\newtheorem{theorem}{Theorem}[section]
\newtheorem{lemma}[theorem]{Lemma}
\newtheorem{corollary}[theorem]{Corollary}

\theoremstyle{definition}

\newcommand{\E}{\mathbb E}

\newcommand{\dd}{\mathrm d}
\newcommand{\e}{\mathrm e}

\title{Another proof that the two color bipartite Ramsey number is $O(2^t)$.}

\author[Y.~Person]{Yury Person}
\address{Institut f\"ur Mathematik, Technische Universit\"at Ilmenau, 98684 Ilmenau, Germany} 
\email{yury.person@tu-ilmenau.de}

\date{29 September 2026}
\begin{document}
\begin{abstract}
For positive integers $t$ and $q$ let $b_q(t)$ be the smallest integer $n$ so that any coloring of the edges of the complete bipartite graph $K_{n,n}$ with $q$ colors yields a monochromatic copy of $K_{t,t}$. 
We give an independent proof that $b_2(t)\le 128\cdot 2^t$ for every positive integer $t$. More generally, for
$0< p\le 1/2$, every bipartite graph of edge density at least $p$, with both classes of size at least $128\cdot p^{-t}$,
contains $K_{t,t}$. This gives $b_q(t)\le 128\cdot q^t$ for every integer $q\ge 2$ and a uniform consequence for
Zarankiewicz numbers.
\end{abstract}

\maketitle

\noindent\textbf{AI disclosure.}
The proof was found by GPT-6 Astra after some conversations. I edited the presentation and verified all proofs.

\section{Introduction}

The classical Ramsey number $r(s,t)$ is the least integer $n$ such that every red--blue coloring of the edges of $K_n$ contains a red $K_s$ or a blue $K_t$. It was first studied by Ramsey in~\cite{Ramsey1930}. Recently it has seen some spectacular progress on improvements both for the symmetric and asymmetric regimes and for many colors, see this survey by Morris~\cite{Morris2026}.

Beineke and Schwenk~\cite{BeinekeSchwenk1976} introduced the corresponding bipartite form of a Ramsey problem. For positive integers $s$, $t$ let $b(s,t)$ be the smallest integer $n$ so that any coloring of the edges of the complete bipartite graph $K_{n,n}$ with two colors, say red and blue, yields a monochromatic copy of $K_{s,t}$, which we will often refer to as a biclique.  In~\cite{BeinekeSchwenk1976} such bipartite Ramsey numbers are determined for some small constant values of one of the classes and the conjecture $b(s,t)=2^s(t-1)+1$, for $s\le t$, is made. 

This question is closely related to the Zarankiewicz extremal problem. 
For vertex classes $V_1$, $V_2$ of sizes $n_1$, $n_2$, let $z(n_1,n_2;s,t)$ be the maximum number of edges in a bipartite graph containing no $K_{s,t}$ with its $s$ vertices in the first class and its $t$ vertices in the second class. K\H{o}v\'ari, S\'os and Tur\'an~\cite{KST1954} proved the explicit bound
\begin{equation}\label{eq:KST}
 z(n,n;t,t)< 1+ tn+(t-1)^{1/t} n^{\frac{2t-1}{t}}.%\le (t-1)^{1/t}n^{2-1/t}+(t-1)n.
\end{equation}
The argument of~\cite{KST1954} is based on counting common neighbors and convexity and yields the following general explicit bound
\begin{equation}\label{eq:kst-rect}
 z(n_1,n_2;s,t)
 \le (t-1)^{1/s}n_1n_2^{1-1/s}+(s-1)n_2.
\end{equation}

The connection of $b(s,t)$ to the Zarankiewicz extremal problem was developed by Irving~\cite{Irving1978} who disproved the conjecture from~\cite{BeinekeSchwenk1976} and whose result yields in particular $b(t,t)<2^{t-1}t$ for $t\ge 21$.

Thomason~\cite{Thomason1982} proved the asymmetric upper bound $b(s,t)\le 2^s(t-1)+1$. 

For $s=t$ we simply write $b(t)$ instead of $b(t,t)$. And, more generally, if $q$ colors are used, we write $b_q(t)$ to denote the least integer $n$ such that every $q$-coloring of $K_{n,n}$ contains a monochromatic $K_{t,t}$. 

As for the lower bound, Hattingh and Henning~\cite{HattinghHenning1998} used Lov\'asz local lemma to prove
\begin{equation}\label{eq:lower}
 b_2(t)\ge
 \left(\frac{\sqrt2}{\e}-o(1)\right)t2^{t/2}.
\end{equation}

Conlon~\cite{Conlon2008},  building on the work of F\"uredi~\cite{Fueredi1996},  improved the upper bound to 
\begin{equation}\label{eq:conlon}
 b_2(t)\le(1+o(1))\,2^{t+1}\log_2t.
\end{equation}

Conlon also proved that, for $t\ge s$, 
$b(s,t)\le (1+o_{s\to\infty}(1))2^s(t-s+2\log_2s)$.

A very recent preprint of Mubayi~\cite{Mubayi2026} removes the $\log t$ factor in Conlon's bound: $b_2(t)\le 2^{t+81}$. More generally,  Mubayi~\cite[Corollary~7.1]{Mubayi2026} shows that a bipartite graph of edge density $p\in(0,1)$, with $n$ vertices in each class, contains $K_{t,t}$, whenever $n\ge C_p p^{-t}$, where $C_p=2^{41}[p(1-p)]^{-20}$.

Our main result is a density criterion with an \emph{absolute} constant in place of the factor of order $t$ supplied by~\eqref{eq:KST}. Our argument was obtained independently of Mubayi's result~\cite{Mubayi2026} and, in fact, uses a different method. Mubayi~\cite{Mubayi2026} also works from a density assumption, but selects one adjacent pair at a time, restricting both classes to the appropriate neighbourhoods. Our argument can be viewed as a generalization of Conlon's approach~\cite{Conlon2008}, by  alternating the roles of the two vertex classes and reducing the remaining target sizes logarithmically.  No assumption about a second color is involved. 

\begin{theorem}[Density bound]\label{thm:density}
Let $t\ge1$ be an integer and let $0<p\le1/2$. Let $G$ be a bipartite graph with vertex classes $V_1,V_2$ of sizes $n_1,n_2$ and with edge density $\dd(V_1,V_2):=\frac{e(G)}{n_1n_2}$ at least $p$. If
\[
 \min\{n_1,n_2\}\ge128p^{-t},
\]
then $G$ contains $K_{t,t}$.
\end{theorem}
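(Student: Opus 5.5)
We prove the statement by induction on a more flexible, asymmetric statement: for integers $a,b\ge0$ and density $p$, if $n_1\ge C_1 p^{-a}$ and $n_2\ge C_2 p^{-b}$ (with suitable constants and slack), then $G$ contains $K_{a,b}$ with the $a$ vertices in $V_1$. Following Conlon's approach, we do not peel off one vertex at a time. Instead we alternate between the classes, taking $s$ vertices of one class at once, where $s$ is roughly logarithmic in the remaining target size of the other class. Taking many vertices at once is what removes the factor of order $t$ coming from~\eqref{eq:KST}.

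\textbf{Key step (alternating dependent selection).} Suppose the target is $K_{a,b}$ with $a\le b$ and the density is $p$. We choose a random $s$-subset $S\subseteq V_1$, with $s\approx \log_2 b$ or a similar slowly growing quantity, and let $N(S)\subseteq V_2$ be its common neighbourhood. By convexity (Jensen applied to $\binom{\deg(v)}{s}$ over $v\in V_2$, as in the Kővári–Sós–Turán counting), $\E|N(S)|\gtrsim p^s n_2$ once $n_1$ is large compared with $s/p$. To keep the density, we must also control the density between $V_1\setminus S$ and $N(S)$. We plan to weight the choice so that this density does not drop below $p(1-\varepsilon_s)$. Concretely, we consider $\E\bigl[|N(S)|\,(\dd(V_1,N(S))-p')\bigr]$ and use the standard fact that the density of a random common neighbourhood is, in expectation (weighted by size), at least the original density. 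This follows from a Hölder-type inequality: $\sum_v \deg(v)^{s+1}/\sum_v \deg(v)^s \ge \sum_v\deg(v)/n_2$. Hence some $S$ has $|N(S)|\ge \tfrac12 p^s n_2$ and density at least $p$ (up to a factor $(1-1/2^{k})$ at step $k$). We then recurse on the pair $(V_1\setminus S, N(S))$ with target $K_{a-s,b}$, swapping the roles of the classes at the next step.

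\textbf{Bookkeeping.} Each selection of $s$ vertices costs a factor $p^{s}$ in the size of the opposite class, which matches the budget $p^{-t}$ exactly. The losses are multiplicative constants: the factor $\tfrac12$, the density decay, and the requirement $n\ge s/p$ needed for the convexity step to be tight, i.e.\ $\binom{pn}{s}\approx (pn)^s/s!$ versus $\binom{n}{s}$. That last ratio loses $\exp(-s^2/(pn))$. Because the chunk sizes shrink logarithmically as the remaining targets shrink, the product of all these losses is a convergent geometric-type product. We should check that it is bounded by $2^{-7}$ when $p\le 1/2$; we expect $p\le1/2$ to be used to bound things like $(1-p)^{-1}\le 2$. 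At the end, once both targets are $O(1)$, we finish with a direct application of~\eqref{eq:kst-rect}.

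\textbf{Main obstacle.} The delicate step is keeping the density from degrading across the roughly $\log t$ rounds of alternation while simultaneously retaining a $p^{s}$ fraction of the vertices. We would handle this with the size-weighted expectation argument above, choosing $S$ to maximise $|N(S)|\cdot(\text{edges into }N(S))$ rather than $|N(S)|$ alone. The second difficulty is choosing the schedule $s_k$ so that the accumulated constant is at most $128$. We would first try $s_k$ equal to $\lfloor\log_2(\text{remaining }t)\rfloor$ and verify that the product of correction factors is at least $1/128$.
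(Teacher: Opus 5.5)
There is a genuine gap; in fact there are two, and the second is the real obstacle.

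\textbf{First: the loss accounting.} With chunks of size $s\approx\log_2(\text{remaining target})$, each round lowers a target by only $O(\log t)$. So there are $\Theta(t/\log t)$ rounds, not $\log t$, and the chunk sizes stay of order $\log t$ for most of the process. The losses therefore do not form a convergent product. Any constant-factor loss per round, such as your $|N(S)|\ge\frac12p^sn_2$, compounds to $2^{-\Theta(t/\log t)}$, which no absolute constant absorbs.

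Your inductive statement is also normalized the wrong way round. After passing to $N(S)$, of size about $p^sn_2$, the target there is still $b$, so the hypothesis $n_2\ge C_2p^{-b}$ is not inherited. Each class must be measured by the target in the \emph{other} class.

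The paper instead makes the spine almost the whole remaining target. For target $K_{r,s}$ with $s=g_b(r)=O(\log r)$ and sizes $n_1=\alpha b^s$, $n_2=\beta b^r$, it picks $U\subseteq V_1$ of size $r-k$, with $k=g_b(s)$, keeps the common neighbourhood $W$, and swaps classes. The targets then collapse as $t\mapsto O(\log t)\mapsto O(\log\log t)\mapsto\cdots$. The per-step loss satisfies $\lambda_s<J(s)=(2s^2L_s+5)2^{-(s-2)/3}$ with $L_s=\log(1+\frac{s-1}{56})$, which is summable over the distinct values of $s$, with total below $3/32$.

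\textbf{Second: the density step is unjustified.} The later selections total about $t$ vertices against a budget of $128p^{-t}$. A relative density deficit $\varepsilon_j$ when the $j$th chunk is chosen costs a factor $(1-\varepsilon_j)^{s_j}$, so you need $\sum_j s_j\varepsilon_j=O(1)$. A factor $1-2^{-k}$ at step $k$, already $\frac12$ at $k=1$, is fatal.

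Your inequality $\E\bigl[|N(S)|\,\dd(V_1,N(S))\bigr]\ge p\,\E|N(S)|$ is true, but it controls only a size-weighted average. Averaging yields an $S$ with density at least $p(1-\varepsilon)$ and $|N(S)|\ge\frac{p\varepsilon}{1-p+p\varepsilon}\,\E|N(S)|$. That is a loss of order $p\varepsilon$ in size, unaffordable for $\varepsilon=O(1/t)$. The same objection applies to maximizing $|N(S)|\cdot e(V_1,N(S))$: a large value of this product is compatible with a density deficit that is fatal later.

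The missing idea is to exploit the absence of the target, arguing by contradiction.
\begin{itemize}
\item If $K_{r,s}$ is absent, then \eqref{eq:forbidden-stars} bounds the $r$-th moment of the normalized degrees $X$ by $(s-1)/\beta$, while $\E X\approx1$.
\item Lemma~\ref{lem:moment} then shows that only a fraction $\theta=O(s^2\log s/r)$ of $V_2$ has degree below $(1-1/s^2)n_1/b$.
\item Deleting these vertices turns the density requirement into a minimum-degree property. This property is inherited by every common neighbourhood $W$ and survives removal of $U$.
\item The result is the density $(1-2/s^2)/b$ needed at the next stage, at a size cost of only a factor $1-\theta$.
\end{itemize}
The one affordable density relaxation, from $p$ to $pw$ with $w=2^{-1/t}$, is made once at initialization, where $w^{-t}=2$ is absorbed into the constant $128$.
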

We remark that Mubayi's general density consequence has a constant $C_p$ depending  on the density $p$, whereas Theorem~\ref{thm:density} uses the same constant $128$ for all $0<p\le1/2$.

Inequalities comparing integer class sizes with real thresholds are understood literally; no rounding of the class sizes is needed in the proof.

As a corollary this immediately implies the bound claimed in the abstract.
\begin{corollary}\label{cor:two}
For every integer $t\ge1$,
\[
 b_2(t)\le128\,2^t.
\]
\end{corollary}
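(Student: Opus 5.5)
The corollary follows from Theorem~\ref{thm:density} with $p=1/2$ by a pigeonhole argument on the two color classes. The main point is that one of the two colors must carry at least half of the edges, so its color class is a bipartite graph of density at least $1/2$. There is no real obstacle here.

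Set $n:=128\cdot 2^t$, which is a positive integer. By the definition of $b_2(t)$ as the least $n$ with the stated property, it suffices to show that every red--blue coloring of the edges of $K_{n,n}$ contains a monochromatic $K_{t,t}$.

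Fix such a coloring, with vertex classes $V_1,V_2$ of size $n$. Let $G_R$ and $G_B$ be the spanning bipartite subgraphs on $(V_1,V_2)$ formed by the red and the blue edges, respectively. Since every edge receives a color,
\[
 e(G_R)+e(G_B)=n^2 .
\]
Hence one of them, say $G_R$, has $e(G_R)\ge n^2/2$, that is, $\dd(V_1,V_2)\ge 1/2$ in $G_R$.

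We now apply Theorem~\ref{thm:density} to $G_R$ with $p=1/2$, which is allowed since $0<p\le 1/2$. The size condition holds because
\[
 \min\{n_1,n_2\}=n=128\cdot 2^t=128\,p^{-t}.
\]
The theorem yields a copy of $K_{t,t}$ in $G_R$, i.e.\ a red $K_{t,t}$. The case where blue is the majority color is symmetric. Therefore $b_2(t)\le 128\cdot 2^t$.

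The same argument with $q$ colors gives $b_q(t)\le 128\,q^t$. In that case the majority color has density at least $1/q$, and we apply the theorem with $p=1/q\le 1/2$.
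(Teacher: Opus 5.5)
Your proof is correct and is the same argument as the paper's. One color class has density at least $1/2$ by pigeonhole, and Theorem~\ref{thm:density} with $p=1/2$ then applies directly to $K_{128\cdot 2^t,\,128\cdot 2^t}$.
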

\begin{proof}
In a red--blue coloring of $K_{128\,2^t,128\,2^t}$, one color has density at least $1/2$. Apply Theorem~\ref{thm:density} to that color.
\end{proof}

\subsection{Organization of the paper}
In  Section~\ref{sec:basics} we record a common-neighbor counting lemma and a lower-tail moment estimate.
In Section~\ref{sec:proof_overview} we describe the proof strategy and its combinatorial invariant. In Section~\ref{sec:mainthm} we prove Theorem~\ref{thm:density}. Section~\ref{sec:closing} gives further consequences.

\section{Auxiliary lemmas}\label{sec:basics}

For a bipartite graph $G$ and a set $U$ contained in one vertex class of $G$, we write $N(U)$ for its common neighborhood in the other class. For a real number $x$, write $x_+=\max\{x,0\}$. The following lemma is a variant of the counting argument from~\cite{KST1954}. 

\begin{lemma}[Common-neighbor averaging]\label{lem:count}
Let $G$ be a bipartite graph with  vertex classes $V_1$ and  $V_2$ with corresponding sizes $n_1$ and $n_2$ and edge density $p$. If $1\le a\le n_1$ is an integer, then
\begin{equation}\label{eq:average}
 \frac{1}{\binom{n_1}{a}}
 \sum_{\substack{U\subseteq V_1\\|U|=a}}|N(U)|
 \ge n_2\left(p-\frac{a-1}{n_1}\right)_+^a.
\end{equation}
 If $G$ contains no $K_{r,s}$ with its $r$ vertices in $V_1$, where $1\le r\le n_1$, then
\begin{equation}\label{eq:forbidden-stars}
 \sum_{y\in V_2}\binom{\deg(y)}r
 \le(s-1)\binom{n_1}r.
\end{equation}
Both statements hold with the classes interchanged.
\end{lemma}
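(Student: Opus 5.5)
Both parts come from double counting the pairs $(U,y)$ with $U\subseteq V_1$, $|U|=a$ (or $r$), and $y\in V_2$ adjacent to every vertex of $U$. For such a pair, $y\in N(U)$ and $U\subseteq N(y)$. Hence
\[
 \sum_{\substack{U\subseteq V_1\\|U|=a}}|N(U)|=\sum_{y\in V_2}\binom{\deg(y)}{a}.
\]
The statement with the classes interchanged follows by symmetry, so it suffices to treat $V_1$ as above.

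For \eqref{eq:forbidden-stars}, take $a=r$. If some $r$-set $U$ had $|N(U)|\ge s$, then $U$ together with $s$ of its common neighbours would form a $K_{r,s}$ with its $r$ vertices in $V_1$. So every term satisfies $|N(U)|\le s-1$, and summing over the $\binom{n_1}{r}$ sets $U$ gives the bound.

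For \eqref{eq:average}, divide the identity by $\binom{n_1}{a}$ and write $d_y=\deg(y)$. The quantity $\binom{d}{a}/\binom{n_1}{a}$ equals $\prod_{i=0}^{a-1}\frac{d-i}{n_1-i}$ when $d\ge a$, and $0$ when $d<a$. For $0\le i\le a-1$ and $d\le n_1$ we have
\[
 \frac{d-i}{n_1-i}\ge\frac{d-i}{n_1}\ge\frac{d-(a-1)}{n_1}.
\]
So when $d\ge a$ the product is at least $\left(\frac{d-a+1}{n_1}\right)_+^a$. When $d\le a-1$ the positive part is $0$, so this lower bound holds for every $d$.

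Now define $f(x)=(x-\frac{a-1}{n_1})_+^a$. It is convex on $\mathbb R$, because it is the composition of the convex nondecreasing map $u\mapsto u_+^a$ with an affine map. Averaging over $y\in V_2$ and applying Jensen's inequality with $\frac1{n_2}\sum_y d_y/n_1=p$ gives
\[
 \frac{1}{\binom{n_1}{a}}\sum_{U}|N(U)|\ge\sum_{y}f(d_y/n_1)\ge n_2 f(p).
\]
This is exactly \eqref{eq:average}.

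The only delicate point is the pointwise step. It has to hold uniformly for all integer degrees $0\le d\le n_1$, including $d<a$, which is why the lower bound is written with the positive part. It must also be checked that $i\le a-1$ keeps the factors $n_1-i$ positive, which holds because $a\le n_1$.
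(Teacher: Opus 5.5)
Your proof is correct and follows the paper's argument. Both use the same double count $\sum_U|N(U)|=\sum_y\binom{\deg(y)}{a}$ and a pointwise bound $\binom{d}{a}/\binom{n_1}{a}\ge\bigl((d-a+1)_+/n_1\bigr)^a$, which the paper gets from $a!\binom{z}{a}\ge(z-a+1)_+^a$ and $a!\binom{n_1}{a}\le n_1^a$ and you get factor by factor; both then apply Jensen's inequality to a convex positive-part power and handle the forbidden-$K_{r,s}$ bound in the same way.
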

\begin{proof}
Count pairs $(U,y)$ with $U\subseteq V_1$, $|U|=a$, and $y$ adjacent to every vertex of $U$. This gives the following identity
\[
 \sum_{\substack{U\subseteq V_1\\|U|=a}}|N(U)|
 =\sum_{y\in V_2}\binom{\deg(y)}{a}.
\]
 For every nonnegative integer $z$ the following bounds hold
\[
 a!\binom za\ge(z-a+1)_+^a,
 \qquad
 a!\binom{n_1}a\le n_1^a.
\]
Since $x\mapsto(x-a+1)_+^a$ is convex, the above bounds with Jensen's inequality provide
\[
\frac{1}{\binom{n_1}{a}}
 \sum_{\substack{U\subseteq V_1\\|U|=a}}|N(U)|
 = \frac{\sum_{y\in V_2}\binom{\deg(y)}a}{\binom{n_1}a}
 \ge \frac{n_2}{n_1^a}
 \left(\sum_{y\in V_2}\frac{\deg(y)-a+1}{n_2}\right)_+^a.
\]
Since the edge density is $p$, the average vertex degree from $V_2$ into $V_1$ is $pn_1$, proving \eqref{eq:average}. Finally, if the specified $K_{r,s}$ is absent, every $r$-set in $V_1$ has at most $s-1$ common neighbors. The same pair count gives \eqref{eq:forbidden-stars}.
\end{proof}

We remark that the bound~\eqref{eq:kst-rect} on the Zarankiewicz number $z(n_1,n_2;s,t)$ mentioned in the introduction follows immediately by applying Lemma~\ref{lem:count} above with $a=s$ to a $K_{s,t}$-free graph.

The next lemma turns a bound on a high moment into control of degrees below their intended scale.

\begin{lemma}[A lower-tail moment estimate]\label{lem:moment}
Let $X$ be a nonnegative random variable, let $r\ge1$ be an integer, and suppose that $\E X^r\le C$. Then
\begin{equation}\label{eq:moment}
 \E(1-X)_+\le(1+C)^{1/r}-\E X.
\end{equation}
In the setting of \eqref{eq:forbidden-stars}, let $b>0$, write $n_2=\beta b^r$, choose $y\in V_2$ uniformly, and set
\begin{equation}\label{eq:X}
 X=\frac{b(\deg(y)-r+1)_+}{n_1}.
\end{equation}
Then
\begin{equation}\label{eq:exact-moment}
 \E X^r\le\frac{s-1}{\beta}.
\end{equation}
\end{lemma}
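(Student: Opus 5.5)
The statement has two parts, and I would prove them separately: \eqref{eq:moment} is a purely probabilistic inequality, and \eqref{eq:exact-moment} is a counting consequence of \eqref{eq:forbidden-stars}.

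\textbf{First part.} Write $(1-X)_+ = 1 - X + (X-1)_+$, so
\[
\E(1-X)_+ = 1 - \E X + \E(X-1)_+ ,
\]
and it suffices to show $1+\E(X-1)_+\le(1+C)^{1/r}$. Set $Y=1+(X-1)_+=\max\{1,X\}$. Then $Y\ge 0$ and $Y^r=\max\{1,X^r\}\le 1+X^r$, hence $\E Y^r\le 1+C$. By Jensen (or the power-mean inequality, since $r\ge1$),
\[
\E Y\le(\E Y^r)^{1/r}\le(1+C)^{1/r}.
\]
Since $\E Y=1+\E(X-1)_+$, this gives \eqref{eq:moment}. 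If $\E X$ could be infinite the claim is vacuous. It is finite here anyway, since $\E X^r<\infty$ forces $\E X<\infty$.

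\textbf{Second part.} I would use the same bound as in the proof of Lemma~\ref{lem:count}: for every nonnegative integer $z$,
\[
(z-r+1)_+^r\le r!\binom zr .
\]
When $z\ge r$ this holds factor by factor, since $z(z-1)\cdots(z-r+1)\ge (z-r+1)^r$; when $z<r$ both sides vanish. Therefore
\[
\E X^r=\frac{b^r}{n_1^r\,n_2}\sum_{y\in V_2}(\deg(y)-r+1)_+^r\le\frac{b^r\,r!}{n_1^r\,n_2}\sum_{y}\binom{\deg(y)}r\le\frac{b^r\,r!\,(s-1)}{n_1^r\,n_2}\binom{n_1}r,
\]
where the last step is \eqref{eq:forbidden-stars}. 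Using $r!\binom{n_1}r\le n_1^r$ and $n_2=\beta b^r$, the right-hand side is at most $(s-1)/\beta$.

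I do not expect a real obstacle. The only idea needed is the rewriting of $(1-X)_+$ through $\max\{1,X\}$ in the first part; the second part is routine bookkeeping of the factorials.
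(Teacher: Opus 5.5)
Your proposal is correct and follows essentially the same route as the paper: the auxiliary variable $Y=\max\{1,X\}$ with $Y^r\le 1+X^r$ and Jensen, combined with the identity $(1-X)_+=1-X+(X-1)_+$. The second part uses the same factorial bounds $(z-r+1)_+^r\le r!\binom zr$ and $r!\binom{n_1}r\le n_1^r$ together with \eqref{eq:forbidden-stars}.
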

\begin{proof}
Put $Y=\max\{X,1\}$. Pointwise $Y^r\le1+X^r$, so Jensen's inequality gives 
$(\E Y)^r\le \E Y^r$ and hence
\[
 \E(X-1)_+=\E Y-1\le (\E Y^r)^{1/r}-1\le (1+\E X^r)^{1/r}-1\le (1+C)^{1/r}-1.
\]
Subtracting $\E(X-1)=\E X-1$ and using $x_+-x=(-x)_+$  proves~\eqref{eq:moment}.

For \eqref{eq:exact-moment}, use Lemma~\ref{lem:count} and the two factorial estimates from its proof:
\begin{align*}
 \E X^r=\E \frac{b^r(\deg(y)-r+1)_+^r}{n_1^r} \le 
  \frac{b^r}{n_1^r}\E\, r!\binom{\deg(y)}{r}
 &=\frac{b^r r!}{n_2n_1^r}
       \sum_{y\in V_2}\binom{\deg(y)}r\\
 &\overset{\eqref{eq:forbidden-stars}}{\le}\frac{b^rr!(s-1)\binom{n_1}r}{n_2n_1^r}
 \le\frac{s-1}{\beta},
\end{align*}
where in the last step we used $n_2=\beta b^r$.
\end{proof}

\section{Proof overview}\label{sec:proof_overview}

We first describe the construction in the forward direction. The formal proof in Section~\ref{sec:mainthm} uses the shorter equivalent argument by descending forbidden targets.

A book consists of a chosen set, its \emph{spine}, together with all its common neighbors in the opposite class. After choosing a large spine in the second class, we need only a much smaller number of additional vertices there. The remaining problem is therefore unbalanced. We then choose a large spine in the first class, interchange the roles of the classes, and repeat. The numbers of vertices still required decrease logarithmically.

Here is the combinatorial invariant underlying this description. The chosen cores $C_1,C_2$ and available classes $V_1,V_2$ satisfy
\begin{equation}\label{eq:invariant}
 \begin{gathered}
 |C_1|=t-r,\qquad |C_2|=t-s,\qquad C_i\cap V_i=\varnothing,\\
 C_1\text{ is complete to }C_2\cup V_2,
 \qquad C_2\text{ is complete to }V_1.
 \end{gathered}
\end{equation}
For each $i$, $C_i$ and $V_i$ lie in the same original host class; the two such classes may be relabelled after a round. A $K_{r,s}$ between the available classes completes the desired $K_{t,t}$.

For a real base $b\ge2$, put
\begin{equation}\label{eq:g}
 g_b(r)=1+\lceil3\log_b r\rceil.
\end{equation}
The typical residual target has $s=g_b(r)$, and the class sizes are measured as
\[
 n_1=\alpha b^s,\qquad n_2=\beta b^r.
\]
For fixed $b$, the reference scale $b^s$ is polynomial in $r$, whereas $b^r$ is exponential. The actual class sizes can be larger: only lower bounds on $\alpha,\beta$ are maintained. The numerical argument keeps these normalized sizes bounded below by an absolute constant and the density at least $(1-2/r^2)/b$.

Put $k=g_b(s)$. After deleting a small proportion of vertices of insufficient degree from the second class, choose a spine $U\subseteq V_1$ of size $r-k$ and retain its common neighborhood $W\subseteq V_2$. Append $U$ to $C_1$ and exchange the classes. The updates are
\[
 (C_1,C_2;V_1,V_2)\longmapsto
 (C_2,C_1\cup U;W,V_1\setminus U),
 \qquad (r,s)\longmapsto(s,k).
\]
They preserve \eqref{eq:invariant}: $U$ is complete to $W$ by definition, and all other required edges follow from the old invariant. The pruning is justified by Lemma~\ref{lem:moment}, under the assumption that the current target is absent. The losses in $\alpha,\beta$ have a summable upper bound indexed by the successive smaller targets $s$. This is why the constant factor in the initial class size does not grow with $t$.

To initialize using density alone, take $w=2^{-1/t}$ and $b=(pw)^{-1}$. In the first common-neighbor count, subtract the entire possible contribution of vertices of degree below $pw n_2$. Enough high-degree common neighbors remain to start the recursion, and their degree condition survives removal of the initial spine. The choice $w^t=1/2$ balances the two normalized supplies. Thus the initialization, like the recursion, uses only the density of a single graph.

\section{Proof of the density bound}\label{sec:mainthm}

We prove a recursive density statement first. Its proof is a contradiction argument: if the desired unbalanced biclique were absent, the same would be true of successively smaller targets, with controlled losses. The process ends where a single application of Lemma~\ref{lem:count} already forces the target.

\begin{lemma}[The recursive density statement]\label{lem:core}
Let $b\ge2$ be real, let $r\ge16$ be an integer, and put $s=g_b(r)$. Suppose a bipartite graph $G$ with vertex classes $V_1$, $V_2$ has sizes
\[
 n_1=\alpha b^s,\qquad n_2=\beta b^r,
 \qquad  \text{ where } \alpha,\beta\ge62,
\]
and edge density $p$ at least $(1-2/r^2)/b$. Then $G$ contains $K_{r,s}$ with its $r$ vertices in the first class.
\end{lemma}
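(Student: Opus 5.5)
Assume $G$ contains no $K_{r,s}$ with its $r$ vertices in $V_1$. We argue by contradiction, or equivalently by strong induction on $r$. In the inductive step we produce a smaller instance of the same lemma, with the roles of the classes interchanged and target $(s,k)$ where $k=g_b(s)$. That instance must contain $K_{s,k}$ with its $s$ vertices in the new first class, and this lifts back to a $K_{r,s}$ in $G$.

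\textbf{Pruning $V_2$.} Apply Lemma~\ref{lem:moment} to $G$ with parameters $(r,s)$, $n_2=\beta b^r$, and the random variable $X$ of \eqref{eq:X}. This gives $\E X^r\le (s-1)/\beta$, and hence
\[
\E(1-X)_+\le (1+(s-1)/\beta)^{1/r}-\E X .
\]
The density hypothesis gives $\E X\gtrsim (1-2/r^2)-br/n_1$. Since $s\approx 3\log_b r$ and $r\ge16$, the error terms $(s-1)/(\beta r)$, $2/r^2$ and $br/(\alpha b^s)$ are all $O(1/r^2)$ or $O(\log r/r)$. So only a small weighted proportion of $y\in V_2$ has degree below $n_1/b$. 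We delete those vertices (say, those with $X<1-\varepsilon_r$). The surviving set $V_2'$ has size at least $(1-\delta_r)n_2$, and every vertex in it has degree at least $(1-\varepsilon_r)n_1/b$, where $\varepsilon_r,\delta_r$ are small.

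\textbf{Choosing the spine and recursing.} Apply \eqref{eq:average} to the pair $(V_1,V_2')$ with $a=r-k$. This yields a set $U\subseteq V_1$ with $|U|=r-k$ whose common neighbourhood $W\subseteq V_2'$ satisfies
\[
|W|\ge |V_2'|\bigl(p'-\tfrac{r}{n_1}\bigr)^{r-k}\approx \beta b^{k}(1-\eta_r),
\]
where $p'\ge(1-\varepsilon_r)/b$. Now consider the graph between $W$ (the new first class, of size $\alpha' b^{k}$) and $V_1\setminus U$ (the new second class, of size $\alpha b^s-r$, i.e.\ $\beta' b^s$). Every $w\in W$ keeps degree at least $(1-\varepsilon_r)n_1/b-r$ into $V_1\setminus U$. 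So the density is at least $(1-\varepsilon_r-o(1))/b$, and we need this to be at least $(1-2/s^2)/b$. If this new instance contains $K_{s,k}$ with $s$ vertices in $V_1\setminus U$ and $k$ vertices in $W$, then adding $U$ gives $r$ vertices in $V_1$, all complete to those $k$ vertices of $W$. This is a $K_{r,s}$ (with sides swapped appropriately), a contradiction.

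\textbf{Bookkeeping and the base case.} Constants must be tracked along the chain $r\to s\to k\to\dots$. The normalized sizes satisfy $\alpha'\ge\beta(1-\eta_r)$ and $\beta'\ge\alpha(1-O(r b^{-s}))$. The losses are of order $\log r/r$, and along a doubly-logarithmic chain these are summable. To keep the statement self-contained, I would strengthen the lemma to allow $\alpha,\beta\ge 62\prod(1+\text{future losses})$. Alternatively, I would verify directly that the losses $\varepsilon_r$ are much smaller than the slack $2/s^2-2/r^2$, and that $62$ absorbs the product of all the $1-\eta$ factors.

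The chain stops once $r<16$, and there a direct argument is needed. Here $s=g_b(r)$ and the sizes are $62b^s$ and $62b^r$, so Lemma~\ref{lem:count} with $a=r$ gives average common neighbourhood at least
\[
62b^r\bigl((1-2/r^2)/b-r/(62b^s)\bigr)^r\ge s,
\]
because $b^s\ge r^3$ makes the correction negligible. This base case must be handled at the point where the recursion would otherwise drop below 16.

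\textbf{Main obstacle.} The hardest step is showing that the density loss $\varepsilon_r$ from the moment pruning stays below the gap between $2/r^2$ and $2/s^2$. The bound $(1+(s-1)/\beta)^{1/r}-1\approx s/(62 r)$ is only of order $\log r/r$, not $1/r^2$. So the threshold must be chosen cleverly: prune only vertices with $X<1-\varepsilon$ using Markov's inequality on $(1-X)_+$. The density on $W$ then only needs to be at least $(1-2/s^2)/b$, and since $s\sim 3\log_b r$, the quantity $2/s^2$ is far larger than $\log r/r$. I would verify this inequality first, for all $r\ge16$ and $b\ge2$.
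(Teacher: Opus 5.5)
Your architecture is the paper's: prune low-degree vertices of $V_2$ using Lemma~\ref{lem:moment} and Markov's inequality for $(1-X)_+$, choose a spine of size $r-k$, exchange the classes, and descend $(r,s)\mapsto(s,g_b(s))$. But two steps fail as written.

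\textbf{The spine count.} You feed the post-pruning minimum degree, $p'\ge(1-\varepsilon_r)/b$, into \eqref{eq:average} with exponent $r-k=(1-o(1))r$. Your bound for the pruned fraction is about $L_s/(\varepsilon_r r)$, where $L_s=\log(1+(s-1)/\beta)$. Making it small therefore needs $\varepsilon_r r\gg L_s$, and in your intended range $\log r/r\ll\varepsilon_r<2/s^2$ the product $\varepsilon_r r$ is indeed large. But then $(1-\varepsilon_r)^{r-k}\le\e^{-\varepsilon_r(r-k)}$ is tiny, not $1-\eta_r$. Taking $\varepsilon_r\lesssim1/r$ instead leaves the Markov bound of constant size, so no threshold works.

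The missing observation is this. Once $\varepsilon_r>2/r^2$, every pruned vertex has degree below the guaranteed average $pn_1\ge(1-2/r^2)n_1/b$. Deleting these vertices therefore does not decrease the density. Hence \eqref{eq:average} applies with density $(1-2/r^2)/b$ and costs only a factor $1-3/r$. The weaker minimum-degree bound is needed only afterwards, as the density of the new instance between $W$ and $V_1\setminus U$, where $(1-2/s^2)/b$ suffices.

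\textbf{Termination.} The real constraint is not fitting $\varepsilon_r$ under $2/s^2-2/r^2$; you can simply take $\varepsilon_r\approx1/s^2$. It is that the pruned fraction be small and summable along the chain. That fraction is of order $s^2L_s/r$, with $r$ possibly as small as $b^{(s-2)/3}$, and this fails for moderate $s$.

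For example, take $b=2$, $(r,s)=(26,16)$ and $\beta=62$, a legitimate instance of the lemma. Lemma~\ref{lem:moment} and the density bound give only $\E(1-X)_+\le0.0114$. The next instance forces $\varepsilon_r<2/16^2<0.0079$, so Markov does not even guarantee one surviving vertex. Thus you cannot recurse until the target drops below $16$, and $62$ cannot absorb the losses of that chain.

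The paper stops as soon as $s\le56$ and applies Lemma~\ref{lem:count} once to the current instance. This gives an average common neighbourhood of at least $\beta(1-3/r)>s-1$ whenever $\beta\ge56$. It is your base-case computation, used on a much larger range. The steps actually taken then have distinct indices $s\ge57$, with $r>2^{(s-2)/3}>2s$. Their losses satisfy $\lambda_s<J(s)$ with $\sum_{s\ge57}J(s)<3/32$, which keeps $\min\{\alpha,\beta\}>62(1-3/32)>56$. The constants are tight, so this computation is the heart of the argument, not routine bookkeeping.

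\textbf{A smaller slip.} The new instance yields $s$ vertices in $W$ and $k$ in $V_1\setminus U$, not the reverse. Only with that orientation does adjoining $U$ give $r$ vertices in $V_1$.
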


\begin{proof}
Assume that the specified $K_{r,s}$ is absent in $G$. At every stage of the descending argument, we will have
\begin{equation}\label{eq:maintain}
 r\ge16,\qquad s=g_b(r),\qquad
 \alpha,\beta\ge56,\qquad
 p\ge\frac{1-2/r^2}{b}.
\end{equation}
The initial lower bound $62$ leaves room for losses. We verify the maintained lower bound $56$ by estimating their total, rather than assuming that it is preserved by an individual step.

Several elementary bounds will be used repeatedly. From the definition of $g_b$,
\begin{equation}\label{eq:basic-g}
 b^s\ge br^3,
 \qquad
 s\le2+3\log_2r\le r\quad(r\ge16).
\end{equation}
Thus
\begin{equation}\label{eq:n-lower}
 n_1\ge56br^3.
\end{equation}
The extra $1$ in the definition of $g_b(r)$ in~\eqref{eq:g} supplies the factor $b$ in this estimate, which is needed uniformly as $b$ becomes large.

\medskip

\noindent\textbf{The terminal case $s\le56$.}
Lemma~\ref{lem:count}, with its slightly weaker form $p-r/n_1$ in place of $p-(r-1)/n_1$, gives an average common-neighbor count over $r$-sets of at least
\begin{align}
 n_2\left(\frac{1-2/r^2}{b}-\frac r{n_1}\right)^r
 &=\beta\left(1-\frac2{r^2}-\frac{br}{n_1}\right)^r\notag\\
 &\ge\beta\left(1-\frac2r-\frac{br^2}{n_1}\right)
 \ge\beta\left(1-\frac3r\right).
 \label{eq:terminal}
\end{align}
In the estimates we used Bernoulli's inequality  $(1-x)^r\ge1-rx$ for $0\le x\le1$ and $br^2/n_1\le1/(56r)\le1/r$.

If $r\ge256$, the final expression in \eqref{eq:terminal} is at least
\[
 56(1-3/256)>55\ge s-1.
\]
If $16\le r<256$, then \eqref{eq:basic-g} gives $s\le25$, while the same expression is at least
\[
 56(1-3/16)=45.5>24\ge s-1.
\]
In either case some $r$-set has more than $s-1$ common neighbors. Since that number is an integer, it has at least $s$ common neighbors, a contradiction.

\medskip

\noindent\textbf{Preparing a step when $s\ge57$.}
Put $k=g_b(s)$ and recall the definition $g_b(s):=1+\lceil 3\log_b s\rceil$. The ceiling relation in \eqref{eq:g} implies
\begin{equation}\label{eq:separation}
 r>b^{(s-2)/3}\ge2^{(s-2)/3}>2s,
 \qquad
 k\le2+3\log_2s<s.
\end{equation}
For the strict lower bound on $r$, use $\lceil3\log_b r\rceil=s-1$, whence $3\log_b r>s-2$. The inequality $2^{(s-2)/3}>2s$ holds at $s=57$ and its left-to-right ratio increases thereafter. The last inequality follows in the same way as \eqref{eq:basic-g}.

Choose $y\in V_2$ uniformly and define $X$ by \eqref{eq:X}. Lemma~\ref{lem:moment} and the absence of $K_{r,s}$ give
\begin{equation}\label{eq:core-moment}
 \E X^r\le\frac{s-1}{\beta}\le\frac{s-1}{56}.
\end{equation}
Also, $(z-r+1)_+\ge z-r$ for every $z\ge0$, so the density hypothesis yields
\begin{equation}\label{eq:core-mean}
 \E X\ge bp-\frac{br}{n_1}
 \ge1-\frac2{r^2}-\frac{br}{n_1}.
\end{equation}
Write
\[
 L_s=\log\left(1+\frac{s-1}{56}\right).
\]
All logarithms without a subscript are natural. Since $0\le L_s/r<1$, the inequality $\e^u-1\le2u$ for $0\le u\le1$, together with Lemma~\ref{lem:moment}, gives
\begin{equation}\label{eq:low-tail}
 \E(1-X)_+
 \le\frac{2L_s}{r}+\frac2{r^2}+\frac{br}{n_1}.
\end{equation}
Here $L_s<s<r$, which verifies the stated range of $u$.

Call $y\in V_2$ \emph{bad} if
\[
 \frac{\deg(y)}{n_1}<\frac{1-1/s^2}{b}.
\]
For a bad vertex, $X\le b\deg(y)/n_1<1-1/s^2$. Consequently, if $\theta$ is the fraction of bad vertices, \eqref{eq:low-tail} implies
\begin{equation}\label{eq:theta}
 \theta\le
 \frac{2s^2L_s}{r}+\frac{2s^2}{r^2}
       +\frac{bs^2r}{n_1}.
\end{equation}
Define
\[
 \lambda_s=\theta+\frac4r,
 \qquad
 J(s)=\bigl(2s^2L_s+5\bigr)2^{-(s-2)/3}.
\]
By \eqref{eq:n-lower},
\begin{equation}\label{eq:lambda}
 r\lambda_s
 \le2s^2L_s+\left(2+\frac1{56}\right)\frac{s^2}{r}+4
 <2s^2L_s+5=J(s)2^{(s-2)/3},
 \qquad \lambda_s<J(s).
\end{equation}
For the strict inequality, \eqref{eq:separation} gives
$s^2/r<s^2 2^{-(s-2)/3}$. This last function decreases for $s\ge57$; at $s=57$, its product with $2+1/56$ is less than $1$.

We now record the estimates which make the losses summable. They also show immediately that $\theta<\lambda_s<1$, so at least one good vertex remains. Put
\[
 F(x)=2x^2\log\frac{x+55}{56}+5\qquad(x\ge57).
\]
The inequality
\[
 \log\frac{x+55}{56}\ge\frac{x}{x+55}
\]
holds at $x=57$, since $\log2>57/112$, and the derivative of the difference is $x/(x+55)^2>0$. It follows that $F'(x)/F(x)\le3/x$: indeed, writing $L_x=\log((x+55)/56)$, we have
\begin{equation}\label{eq:Fx}
 F'(x)=4xL_x+\frac{2x^2}{x+55}
 \le6xL_x\le\frac{3F(x)}x.
\end{equation}
Integrating over $[s,s+1]$ gives
\begin{align}
 \frac{J(s+1)}{J(s)}=2^{-1/3}\exp\left(\int_{s}^{s+1} \frac{F'(x)}{F(x)}\mathrm{d}x\right)
 &\overset{\eqref{eq:Fx}}{\le}\exp(3/s)2^{-1/3}\notag\\
 &\le\exp(1/19)2^{-1/3}
 <\frac{17}{20}.
 \label{eq:ratio}
\end{align}
 Moreover, $L_{57}=\log2<7/10$, and therefore, using $2^{-1/3}<\frac{4}{5}$, 
\begin{equation}\label{eq:first-J}
 J(57)
 <\left(2\cdot57^2\cdot\frac7{10}+5\right)
       \frac{4}{5}\,2^{-18}
 =\frac{1423}{102400}.
\end{equation}
In particular, all $J(s)$ with $s\ge57$ are less than $1$ due to~\eqref{eq:ratio}.

\medskip

\noindent\textbf{Choosing a spine and descending the target.}
Delete the bad vertices from $V_2$. Their degrees are below the guaranteed average degree, because $r>2s$ implies $1/s^2>2/r^2$. Deleting them therefore does not decrease the density. Lemma~\ref{lem:count} supplies a set $U\subseteq V_1$ of size $r-k$ whose common neighborhood $W$ among the good vertices (in $V_2$) satisfies
\begin{align}
 |W|
 &\ge(1-\theta)n_2
       \left(\frac{1-2/r^2}{b}-\frac r{n_1}\right)^{r-k}\notag\\
 &\ge(1-\theta)n_2b^{-(r-k)}
       \left(1-\frac3r\right).
 \label{eq:spine}
\end{align}
To obtain the second inequality, apply Bernoulli's inequality as in \eqref{eq:terminal}; the exponent $r-k$ is at most $r$, and the base after factoring out $b^{-1}$ lies in $[0,1]$.

Every vertex of $W$ had degree at least $(1-1/s^2)n_1/b$ into $V_1$. After deleting $U$, its degree fraction into $V_1\setminus U$ is at least
\begin{align}
 \frac{1-1/s^2}{b}-\frac r{n_1-r}
 &\ge\frac{1-1/s^2}{b}-\frac{2r}{n_1}
 \ge\frac{1-2/s^2}{b}.
 \label{eq:inherited-degree}
\end{align}
Here $n_1\ge2r$, and
\[
 \frac{2br}{n_1}\le\frac1{28r^2}\le\frac1{s^2},
\]
by \eqref{eq:n-lower} and $s\le r$. The first estimate in \eqref{eq:inherited-degree} is deliberately weaker than the direct quotient bound, so it remains valid regardless of which neighbors were removed.

The graph between $V_1\setminus U$ and $W$ contains no $K_{k,s}$ with $k$ vertices in $V_1\setminus U$: adjoining $U$ would give the forbidden $K_{r,s}$. Exchange the classes. The new graph is therefore $K_{s,k}$-free, with the larger target $s$ in its first class, density at least $(1-2/s^2)/b$, and normalized sizes
\begin{align*}
 \alpha'&=|W|b^{-k}
       \ge\beta(1-\theta)(1-3/r),\\
 \beta'&=(n_1-(r-k))b^{-s}
       \ge\alpha(1-r/n_1).
\end{align*}
Since $(1-\theta)(1-3/r)\ge1-\theta-3/r$ and $r/n_1\le4/r$, we obtain
\begin{equation}\label{eq:one-loss}
 \min\{\alpha',\beta'\}
 \ge\min\{\alpha,\beta\}(1-\lambda_s).
\end{equation}
This is the desired step $(r,s)\mapsto(s,g_b(s))$.

\medskip

\noindent\textbf{Total loss and termination.}
At each step with $s\ge57$, the next smaller target is $g_b(s)<s$. Thus the integers $s$ indexing the steps are distinct. Equations \eqref{eq:ratio} and \eqref{eq:first-J} imply
\begin{equation}\label{eq:total-loss}
 \sum\lambda_s
 <\sum_{s=57}^{\infty}J(s)
 \le\frac{20}{3}J(57)
 <\frac{1423}{15360}<\frac3{32}.
\end{equation}
For numbers in $[0,1]$, the elementary product inequality
$\prod_i(1-x_i)\ge1-\sum_i x_i$ follows by induction. Hence \eqref{eq:one-loss} gives, at every stage reached from the initial graph,
\[
 \min\{\alpha,\beta\}
 >62\left(1-\frac3{32}\right)
 =\frac{1798}{32}>56.
\]
There is no circular assumption in using \eqref{eq:maintain}. If a first stage with a normalized size below $56$ existed, all preceding steps would satisfy its hypotheses, and the displayed product estimate would rule out that first stage. The lower bound is therefore maintained throughout.

The larger target decreases strictly at every nonterminal step, and remains at least $57$ when such a step is taken. Eventually a stage with $s\le56$ is reached, where \eqref{eq:terminal} gives the contradiction. This proves the lemma.
\end{proof}

It remains to initialize the recursion from the density assumption of Theorem~\ref{thm:density}. 

\begin{proof}[Proof of Theorem~\ref{thm:density}]
Let the original classes have sizes $n_1,n_2\ge128p^{-t}$. Throughout this initialization, $n_1,n_2$ refer to these original sizes.

\medskip

\noindent\textbf{Small targets.}
Suppose first that $t<64$. Lemma~\ref{lem:count} gives an average common-neighbor count for $t$-sets in $V_1$ of at least
\begin{align*}
 n_2(p-t/n_1)^t=n_2p^t\left(1-\frac{t}{pn_1}\right)^t
 &\ge n_2p^t\left(1-\frac{t^2}{pn_1}\right)\\
 &\ge128-t^2p^{t-1}
 \ge128-t^2 2^{1-t}
 \ge128-\frac94>t,
\end{align*}
where we applied Bernoulli's inequality and used 
 $n_1\ge 128 p^{-t}$ and hence $t^2/(pn_1)\le t^2p^{t-1}/128\le9/512$, as $p\le 1/2$ and $t^22^{1-t}$ is at most $9/4$ over positive integers. 
 Thus a $t$-set has at least $t$ common neighbors, as required.

\medskip

\noindent\textbf{A slightly reduced density and a large initial spine.}
Assume $t\ge64$, and put
\begin{equation}\label{eq:initial-parameters}
 w=2^{-1/t},\qquad b=(pw)^{-1},\qquad
 s=g_b(t)=1+\lceil 3\log_b t\rceil,\qquad a=t-s.
\end{equation}
Then $b\ge2$ and $w^t=1/2$. Furthermore,
\begin{equation}\label{eq:initial-s}
 s\le2+3\log_2t\le t/3.
\end{equation}
The second inequality holds at $t=64$, and the ratio $(2+3\log_2t)/t$ decreases for $t\ge64$. In particular $a\ge1$.

Call a vertex in the first class \emph{high} if its degree into the second class is at least $pw n_2$. Choose an $a$-set in the second class uniformly. A vertex of degree $d$ is its common neighbor with probability
\[
 \frac{\binom da}{\binom{n_2}a}\le(d/n_2)^a.
\]
 Thus the total expected contribution of vertices which are not high is at most $n_1(pw)^a$. On the other hand, Lemma~\ref{lem:count} bounds the expected number of all common neighbors from below by $n_1(p-a/n_2)^a$. Subtracting the former contribution from the latter shows that some $a$-set $U\subseteq V_2$ has a set $H\subseteq V_1$ of high common neighbors with
\begin{equation}\label{eq:H}
 |H|\ge n_1\bigl[(p-a/n_2)^a-(pw)^a\bigr]
 \ge n_1p^a(1-\delta-w^a),
 \qquad \delta=\frac{a^2}{pn_2}.
\end{equation}
Bernoulli's inequality gives the last step. Its base is positive since
$pn_2\ge128\,2^{t-1}>a$. Also,
\begin{equation}\label{eq:delta}
 \delta\le\frac{t^2 2^{1-t}}{128}<\frac1{1000}
 \qquad(t\ge64).
\end{equation}

Retain all of $H$ and delete $U$ from the second class. Any $K_{t,s}$ between $H$ and $V_2\setminus U$ completes to the original $K_{t,t}$ by adjoining $U$. Normalize the residual class sizes as
\[
 \alpha=|H|b^{-s},\qquad
 \beta=(n_2-a)b^{-t}.
\]
Using $a+s=t$ and $w^t=1/2$, we obtain
\begin{align}
 \alpha
 &\ge n_1p^t\bigl(w^s-w^t-\delta w^s\bigr)
 \ge128\left(w^s-\frac12-\delta\right),
 \label{eq:alpha-init}\\
 \beta
 &\ge64-t2^{-t}>63.
 \label{eq:beta-init}
\end{align}
For \eqref{eq:beta-init}, use $n_2p^tw^t\ge64$ and
$a(pw)^t\le t2^{-t}$. To justify replacing $n_1p^t$ by its lower bound in \eqref{eq:alpha-init}, note that
\[
 w^s=\exp\left(-\frac{s\log2}{t}\right)
 \ge1-\frac st\ge\frac23.
\]
The bracket is therefore positive by \eqref{eq:delta}. In particular,
\begin{equation}\label{eq:H-rough}
 \alpha>128\left(\frac23-\frac12-\frac1{1000}\right)>20,
 \qquad |H|\ge20bt^3.
\end{equation}
The last estimate follows from $b^s\ge bt^3$.

\medskip

\noindent\textbf{The inherited density.}
Every vertex of $H$ is high in the original graph, so its degree fraction into $V_2\setminus U$ is at least
\begin{equation}\label{eq:initial-density}
 \frac{pw n_2-a}{n_2-a}
 \ge pw-\frac a{n_2-a}=\frac{1}{b}-\frac a{n_2-a}
 \ge\frac{1-2/t^2}{b}.
\end{equation}
For the last inequality, $b\le2/p$, $a\le t$, and $n_2\ge2a$ give
\[
 \frac{ba}{n_2-a}\le\frac{4t}{pn_2}
 \le\frac{t2^{1-t}}{32}<\frac2{t^2}\qquad(t\ge64).
\]
 Hence the residual graph $G[H\dot\cup (V_2\setminus U)]$ has the density required in Lemma~\ref{lem:core}.

\medskip

\noindent\textbf{Finishing the initialization.}
If $s\le56$, we need only the rough lower bound \eqref{eq:H-rough}, not $\alpha\ge62$. Lemma~\ref{lem:count} applied in the residual graph gives an average common-neighbour count for a $t$-set in $H$ of at least
\begin{align*}
 \beta\left(1-\frac2{t^2}-\frac{bt}{|H|}\right)^t
 &\ge\beta\left(1-\frac2t-\frac{bt^2}{|H|}\right)\\
 &\ge\beta(1-3/t)
 >63(1-3/64)>60>s-1.
\end{align*}
The base is positive by \eqref{eq:H-rough}, which also gives
$bt^2/|H|\le1/(20t)\le1/t$. This supplies a biclique $K_{t,s}$, which together with $U$ provides the required biclique $K_{t,t}$.

If $s\ge57$, the ceiling relation gives
\[
 t>b^{(s-2)/3}\ge2^{55/3}>2^{18}>4096.
\]
For $t\ge4096$, the decreasing ratio used in \eqref{eq:initial-s} yields
\[
 \frac st\le\frac{2+3\log_2t}{t}
 \le\frac{38}{4096}<\frac1{100}.
\]
Therefore $w^s\ge1-s/t>99/100$, and \eqref{eq:alpha-init} gives
\[
 \alpha>128\left(\frac{99}{100}-\frac12-\frac1{1000}\right)
 =62.592>62.
\]
Together with \eqref{eq:beta-init} and \eqref{eq:initial-density}, this verifies every hypothesis of Lemma~\ref{lem:core}, with $r=t$. Its $K_{t,s}$, together with $U$, gives $K_{t,t}$. Theorem~\ref{thm:density} is proved.
\end{proof}

\section{Concluding remarks and further consequences}\label{sec:closing}

The argument in Section~4 uses one graph throughout. The density parameter is allowed to be any real $p\in(0,1/2]$, and the two class sizes need not be equal. These two features give the following consequences without any additional structural argument.

\begin{corollary}[More than two colors]\label{cor:q}
For integers $q\ge2$ and $t\ge1$,
\[
 b_q(t)\le128q^t.
\]
More generally, every $q$-coloring of a complete bipartite graph with both class sizes at least $128q^t$ contains a monochromatic $K_{t,t}$.
\end{corollary}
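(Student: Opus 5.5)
The plan is to reduce Corollary~\ref{cor:q} directly to Theorem~\ref{thm:density} by pigeonhole on the colors, exactly as in the proof of Corollary~\ref{cor:two}. Take a $q$-coloring of the complete bipartite graph with vertex classes $V_1,V_2$ of sizes $n_1,n_2\ge128q^t$. The $q$ color classes partition the $n_1n_2$ edges, so some color $c$ has at least $n_1n_2/q$ edges. Let $G_c$ be the spanning bipartite graph on $V_1\dot\cup V_2$ formed by the edges of color $c$; its edge density $\dd(V_1,V_2)=e(G_c)/(n_1n_2)$ is then at least $p:=1/q$.

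Next we check the hypotheses of Theorem~\ref{thm:density} with $p=1/q$. Since $q\ge2$, we have $0<p\le1/2$. Also $p^{-t}=q^t$, so the size condition $\min\{n_1,n_2\}\ge128p^{-t}$ is exactly the assumption $\min\{n_1,n_2\}\ge128q^t$. The theorem allows unequal class sizes and only asks that the density be at least $p$, so it applies to $G_c$ as it stands. It yields a $K_{t,t}$ in $G_c$, that is, a monochromatic $K_{t,t}$ in color $c$. This proves the general statement.

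Taking $n_1=n_2=n$ with $n\ge128q^t$ shows that every $q$-coloring of $K_{n,n}$ contains a monochromatic $K_{t,t}$. Hence $b_q(t)\le\lceil128q^t\rceil=128q^t$, since $128q^t$ is an integer.

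There is no real obstacle here. The only points to watch are that the density threshold holds non-strictly, which the averaging gives, and that $q\ge2$ is needed to guarantee $p\le1/2$.
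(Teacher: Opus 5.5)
Your proof is correct and follows the paper's own argument: take a largest color class, which has density at least $1/q\le 1/2$, and apply Theorem~\ref{thm:density} with $p=1/q$, so that $128p^{-t}=128q^t$. Your extra checks (non-strict density, $q\ge2$ giving $p\le1/2$, unequal class sizes, and $128q^t$ being an integer) are all accurate.
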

\begin{proof}
A largest color class has density at least $1/q\le1/2$. Apply Theorem~\ref{thm:density} with $p=1/q$.
\end{proof}

\begin{corollary}[A bound for Zarankiewicz numbers]
Let $t,n_1,n_2$ be positive integers, and put
$n_*:=\min\{n_1,n_2\}$. If $n_*\ge 128\cdot 2^t$, then
\[
  z(n_1,n_2;t,t)
  <128^{1/t}n_1n_2n_*^{-1/t}.
\]
\end{corollary}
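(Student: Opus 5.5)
We prove the bound by contraposition from Theorem~\ref{thm:density}. Let $G$ be a bipartite graph with classes of sizes $n_1,n_2$, with no $K_{t,t}$, and with $e(G)=z(n_1,n_2;t,t)$. Write its density as $p_0=e(G)/(n_1n_2)$. The target inequality is equivalent to
\[
 p_0<128^{1/t}n_*^{-1/t}, \qquad\text{that is,}\qquad p_0^t n_*<128 .
\]
So it suffices to show that any graph with $p_0^t n_*\ge128$ contains $K_{t,t}$.

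Suppose, for contradiction, that $p_0\ge p:=(128/n_*)^{1/t}$. The hypothesis $n_*\ge128\cdot2^t$ gives $128/n_*\le 2^{-t}$, so $p\le 1/2$. Also $p>0$ trivially. With this choice we have $p^{-t}=n_*/128$, hence
\[
 \min\{n_1,n_2\}=n_*=128p^{-t}.
\]
Since $G$ has density $p_0\ge p$, it satisfies every hypothesis of Theorem~\ref{thm:density} with this real $p$. Therefore $G$ contains $K_{t,t}$, a contradiction.

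Two small points need checking.

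\textbf{Real $p$ and equality.} The theorem allows any real $p\in(0,1/2]$ and uses a non-strict inequality in the size condition. This is exactly why the conclusion can be stated with strict inequality: equality $p_0=p$ is already excluded.

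\textbf{Orientation.} Theorem~\ref{thm:density} produces a $K_{t,t}$, and since both sides of a $K_{t,t}$ have $t$ vertices, the orientation convention in the definition of $z$ is irrelevant.

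There is no real obstacle here. The only care needed is confirming $p\le1/2$, which is precisely where the hypothesis $n_*\ge128\cdot2^t$ enters, and noting that the argument is insensitive to the asymmetry $n_1\neq n_2$.
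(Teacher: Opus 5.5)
Your proof is correct and is essentially the paper's own argument: set $p=(128/n_*)^{1/t}$, observe $0<p\le 1/2$ and $n_*=128p^{-t}$, and apply Theorem~\ref{thm:density}. This shows that a $K_{t,t}$-free graph must have fewer than $pn_1n_2$ edges, which is the stated bound.
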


\begin{proof}
Put
\[
  p:=\left(\frac{128}{n_*}\right)^{1/t}.
\]
The assumption on $n_*$ gives $0<p\le 1/2$, and
$n_*=128p^{-t}$. By Theorem~1.1, every bipartite graph
with class sizes $n_1,n_2$ and at least $pn_1n_2$ edges
contains $K_{t,t}$. Consequently, every $K_{t,t}$-free
graph with these class sizes has fewer than
\[
  pn_1n_2=128^{1/t}n_1n_2n_*^{-1/t}
\]
edges, as required.
\end{proof}

In particular, for $n\ge 128\cdot 2^t$,
\[
  z(n,n;t,t)<128^{1/t}n^{2-1/t}.
\]
The significance here is the uniform dependence on $t$
in the dense regime; the exponent $2-1/t$ for fixed $t$
is unchanged.

\providecommand{\bysame}{\leavevmode\hbox to3em{\hrulefill}\thinspace}
\providecommand{\MR}{\relax\ifhmode\unskip\space\fi MR }
% \MRhref is called by the amsart/book/proc definition of \MR.
\providecommand{\MRhref}[2]{%
  \href{http://www.ams.org/mathscinet-getitem?mr=#1}{#2}
}
\providecommand{\href}[2]{#2}

\end{document}